\documentclass[12pt]{amsart}
\usepackage{amssymb}
\newtheorem{theorem}{Theorem}
\newtheorem{lemma}[theorem]{Lemma}
\newtheorem{proposition}[theorem]{Proposition}
\newtheorem{corollary}[theorem]{Corollary}

\newtheorem{question}{Question}[section]
\newtheorem{definition}[theorem]{Definition}

\title[A common extension]{A common extension of Arhangel'skii's Theorem and the Hajnal-Juh\'asz inequality}
\author[A. Bella] {Angelo Bella}

\thanks{The research that led to the present paper was partially
supported by a grant of the group GNSAGA of INdAM}

\address{ Dipartimento di Matematica e Informatica, viale A. 
Doria 6, 95125 Catania, Italy}
\email{bella@dmi.unict.it}

\author[S. Spadaro]{Santi Spadaro}

\address{ Dipartimento di Matematica e Informatica, viale A. 
Doria 6, 95125 Catania, Italy}
\email{santidspadaro@gmail.com}

\subjclass[2010]{ 54A25, 54D20, 54D10}
\keywords{cardinality bounds, cardinal invariants, cellularity,
Lindel\"of, weakly Lindel\"of, piecewise weakly Lindel\"of.}
\begin{document}

\begin{abstract} {We present a bound for the weak Lindel\"of number of the $G_\delta$-modification of a Hausdorff space which implies various known cardinal inequalities, including the following two fundamental results in the theory of cardinal invariants in topology:  $|X|\le 2^{L(X)\chi(X)}$
(Arhangel'ski\u\i) and $|X|\le 2^{c(X)\chi (X)}$
(Hajnal-Juh\'asz). This solves a question that goes back to Bell, Ginsburg and Woods \cite{BGW} and is mentioned in Hodel's survey on Arhangel'ski\u\i's Theorem \cite{H}. In contrast to previous attempts we do not need any separation axiom beyond $T_2$.}
\end{abstract}
\maketitle

\section{Introduction}
Two of the milestones in the theory of cardinal invariants in topology are the following inequalities:

\begin{theorem} (Arhangel'ski\u\i, 1969) \cite{A} If
$X$ is
a $T_2$ space, then $|X|\le 2^{L(X)\chi(X)}$. \end{theorem}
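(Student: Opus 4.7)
The plan is a closing-off argument of length $\kappa^+$, where $\kappa = L(X)\chi(X)$. First, for each $x \in X$ fix a local base $\mathcal{V}_x$ at $x$ with $|\mathcal{V}_x| \le \kappa$. I then construct by transfinite recursion an increasing chain $\{H_\alpha : \alpha < \kappa^+\}$ of closed subsets of $X$, each of cardinality at most $2^\kappa$, satisfying the following closing-off property: whenever $\mathcal{U} \subseteq \bigcup\{\mathcal{V}_x : x \in H_\alpha\}$ has $|\mathcal{U}| \le \kappa$ and $X \setminus \bigcup \mathcal{U} \ne \emptyset$, some point of $X \setminus \bigcup \mathcal{U}$ is placed in $H_{\alpha+1}$.

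At each successor stage there are at most $(2^\kappa)^\kappa = 2^\kappa$ admissible families $\mathcal{U}$, so picking one witness per family and then taking closure preserves $|H_{\alpha+1}| \le 2^\kappa$; at limits I take unions. Let $H = \bigcup_{\alpha < \kappa^+} H_\alpha$. Because $\chi(X) \le \kappa$ and $\mathrm{cf}(\kappa^+) > \kappa$, any point of $\overline{H}$ is captured by some $H_\alpha$ through a local base of size at most $\kappa$, so $H$ is closed and $|H| \le 2^\kappa$.

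The crux is proving $H = X$. Assume toward contradiction that $y \in X \setminus H$. For each $x \in H$ the Hausdorff axiom supplies disjoint open neighborhoods of $x$ and $y$; shrinking the $x$-side to an element $U_x \in \mathcal{V}_x$ preserves $y \notin \overline{U_x}$. Then $\{U_x : x \in H\} \cup \{X \setminus H\}$ is an open cover of $X$, and $L(X) \le \kappa$ produces a subcover indexed by some $I$ of size at most $\kappa$. By cofinality, $\{x_i : i \in I\} \subseteq H_\alpha$ for some $\alpha < \kappa^+$. The closing-off property then forces $H_{\alpha+1}$ to contain a point of $X \setminus \bigcup_{i \in I} U_{x_i}$; but $H_{\alpha+1} \subseteq H \subseteq \bigcup_{i \in I} U_{x_i}$, a contradiction.

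The main obstacle I expect is the cardinal bookkeeping, keeping $|H_\alpha| \le 2^\kappa$ at every stage while simultaneously maintaining closedness without any separation axiom beyond $T_2$. Both reduce to careful use of $(2^\kappa)^\kappa = 2^\kappa$ and $\mathrm{cf}(\kappa^+) > \kappa$; the $T_2$ hypothesis itself enters only once, in extracting a basic neighborhood $U_x \in \mathcal{V}_x$ with $y \notin \overline{U_x}$.
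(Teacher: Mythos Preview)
Your closing-off argument is the classical one and is essentially correct. One inaccuracy in your final paragraph, however: the Hausdorff hypothesis enters \emph{twice}, not once. Beyond separating $y$ from each $x\in H$, it is essential for the step ``taking closure preserves $|H_{\alpha+1}|\le 2^\kappa$'': the bound $|\overline{A}|\le |A|^{\chi(X)}$ (equivalently, uniqueness of limits of convergent filters) genuinely requires $T_2$ and does not follow from $(2^\kappa)^\kappa=2^\kappa$ and $\chi(X)\le\kappa$ alone. Without it the closures could explode, which is precisely why Arhangel'ski\u\i's inequality fails for $T_1$ spaces.

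The paper does not prove this theorem directly; it is quoted as background and then recovered from the paper's main inequality $|X|\le 2^{pwL_c(X)\cdot\chi(X)}$ for Hausdorff $X$ (Corollary~\ref{maincor}) together with the easy bound $pwL_c(X)\le L(X)$ (Proposition~\ref{prop}). That inequality in turn is deduced from a bound on $wL(X^c_\kappa)$ (Theorem~\ref{mainthm}), proved via a single $\kappa$-closed elementary submodel of $H(\theta)$ rather than a length-$\kappa^+$ transfinite recursion. So your route is the standard direct one, while the paper's route detours through a strictly stronger statement and different machinery; the payoff of the latter is that the very same argument simultaneously yields the Hajnal--Juh\'asz inequality via $pwL_c(X)\le c(X)$.
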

\begin{theorem} (Hajnal-Juh\'asz, 1967) \cite{HJ} If
$X$ is
a $T_2$ space, then $|X|\le 2^{c(X)\chi(X)}$. \end{theorem}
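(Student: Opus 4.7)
I will prove $|X| \le 2^\kappa$ with $\kappa = c(X)\chi(X)$ by a closing-off argument of length $\kappa^+$. Fix a local base $\mathcal{B}_y$ of cardinality $\le \kappa$ at each $y \in X$, and recursively build an increasing chain $\{H_\alpha : \alpha < \kappa^+\}$ of subsets of $X$, each of cardinality $\le 2^\kappa$, closed under two operations at each successor stage: (a) if $A \subseteq H_\alpha$ has $|A| \le \kappa$, then $\overline{A} \subseteq H_{\alpha+1}$, using that $|\overline{A}| \le |A|^\kappa$ in a $T_2$ space with $\chi(X) \le \kappa$ (each limit point is the unique limit of a $\kappa$-indexed net from $A$ picked via a decreasing local base); and (b) a \emph{witness} property: for every collection $\{\mathcal{U}_\eta : \eta < \kappa\}$ of subfamilies of $\bigcup_{y \in H_\alpha}\mathcal{B}_y$ with each $|\mathcal{U}_\eta| \le \kappa$ and $X \setminus \bigcup_\eta \overline{\bigcup \mathcal{U}_\eta} \ne \emptyset$, a point of this complement is placed in $H_{\alpha+1}$. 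Routine counting gives $|H_{\alpha+1}| \le 2^\kappa$, and (a) combined with the regularity of $\kappa^+$ makes $H := \bigcup_\alpha H_\alpha$ closed in $X$.

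To show $H = X$, assume toward contradiction that $x \in X \setminus H$. By Hausdorffness, for each $y \in H$ pick $U_y \in \mathcal{B}_y$ and $V_y \in \mathcal{B}_x$ with $U_y \cap V_y = \emptyset$, so $V_y \subseteq X \setminus \overline{U_y}$. Since $|\mathcal{B}_x| \le \kappa$, the assignment $y \mapsto V_y$ partitions $H$ into classes $H_V = \{y \in H : V_y = V\}$ indexed by $V \in \mathcal{B}_x$. Apply the standard cellularity lemma---in a space with $c(X) \le \kappa$, any family of open sets has a $\le \kappa$-sized subfamily whose union has the same closure---to each $\{U_y : y \in H_V\}$ to extract $\mathcal{U}_V$ of size $\le \kappa$ satisfying $H_V \subseteq \overline{\bigcup \mathcal{U}_V} \subseteq X \setminus V$. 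Hence $x \notin \overline{\bigcup \mathcal{U}_V}$ for every $V \in \mathcal{B}_x$, while $H = \bigcup_V H_V \subseteq \bigcup_V \overline{\bigcup \mathcal{U}_V}$. Choosing $\alpha < \kappa^+$ large enough that every member of every $\mathcal{U}_V$ belongs to $\bigcup_{y \in H_\alpha}\mathcal{B}_y$, property (b) supplies $z \in H_{\alpha+1} \setminus \bigcup_V \overline{\bigcup \mathcal{U}_V}$, contradicting $z \in H$.

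The main obstacle is the passage from pointwise Hausdorff separation to a uniform one. The naive plan---pick one $U_y$ per $y \in H$, reduce by cellularity to a single subfamily $\mathcal{U}$, then hope $x \notin \overline{\bigcup \mathcal{U}}$---collapses because the closure of a union can properly exceed the union of closures, so knowing $x \notin \overline{U_y}$ individually gives nothing about $\overline{\bigcup \mathcal{U}}$. Partitioning $H$ by the choice of basic neighborhood $V_y \in \mathcal{B}_x$ sidesteps this by producing $\le \kappa$ pieces, on each of which the separation is genuine (via the open set $V$); this is precisely why the witness step (b) must be formulated with $\kappa$-many families rather than just one, anticipating the structure that emerges from the partition.
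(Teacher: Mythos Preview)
Your argument is correct; it is essentially the classical Pol--\v{S}apirovski\u\i\ closing-off proof of the Hajnal--Juh\'asz inequality. One small quibble: local bases of size $\kappa>\omega$ need not admit a decreasing well-ordering, so the parenthetical justification of $|\overline{A}|\le|A|^\kappa$ via ``a $\kappa$-indexed net \dots\ picked via a decreasing local base'' does not work as stated. The inequality itself is nevertheless true for Hausdorff $X$ with $\chi(X)\le\kappa$ --- map each $x\in\overline{A}$ to the trace family $\{U\cap A:U\in\mathcal B_x\}$ and use $\psi_c(X)\le\chi(X)$ to see this is injective --- so the gap is only in the explanation, not the logic.

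The paper does not prove this theorem directly. It introduces the cardinal invariant $pwL_c(X)$, observes that $pwL_c(X)\le c(X)$, and derives the Hajnal--Juh\'asz bound from the stronger inequality $|X|\le 2^{pwL_c(X)\cdot\chi(X)}$, which is in turn obtained by an elementary-submodel argument about the $G^c_\kappa$-modification. The key combinatorial step, however, is the same as yours: given a point $p$ outside the approximating set, one decomposes the separating open cover according to which member of a fixed local base $\{V_\alpha:\alpha<\kappa\}$ at $p$ witnesses disjointness, obtaining at most $\kappa$ pieces, and then thins each piece to size $\le\kappa$ --- you do this with the cellularity lemma, the paper by invoking the defining property of $pwL_c$. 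Your witness clause~(b), demanding a point missed by $\bigcup_\eta\overline{\bigcup\mathcal U_\eta}$ for every $\kappa$-indexed family of $\kappa$-small subfamilies, is exactly the closing-off counterpart of the paper's $pwL_c$ hypothesis. What the paper's detour through $pwL_c$ buys is a single proof that simultaneously yields Arhangel'ski\u\i's theorem (via $pwL_c\le L$); your route is more elementary and self-contained for the Hajnal--Juh\'asz inequality alone, and could be upgraded to the paper's generality simply by replacing the cellularity lemma with the assumption $pwL_c(X)\le\kappa$.
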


Here $\chi(X)$ denotes the \emph{character} of $X$, $c(X)$ denotes the \emph{cellularity} of $X$, that is the supremum of the cardinalities of the pairwise disjoint collection of non-empty open subsets of $X$ and $L(X)$ denotes the \emph{Lindel\"of degree} of $X$, that is the smallest cardinal $\kappa $ such that every open cover of $X$ has a subcover of size at most $\kappa$.

The intrinsic difference between the cellularity and the Lindel\"of degree makes it non-trivial to find a common extension of the two previous inequalities. The first attempt was done in 1978 by Bell, Ginsburg and Woods \cite{BGW}, who used the notion of weak Lindel\"of degree. The weak Lindel\"of degree of $X$ ($wL(X)$) is defined as the least cardinal $\kappa$ such that every open cover of $X$ has a $\leq \kappa $-sized subcollection whose union is dense in $X$. Clearly, $wL(X) \leq L(X)$ and we also have $wL(X) \leq c(X)$, since every open cover without $<\kappa$-sized dense subcollections can be refined to a $\kappa$-sized pairwise disjoint family of non-empty open sets by an easy transfinite induction. Unfortunately, the Bell-Ginsburg-Woods result needs a separation axiom which is much stronger than Hausdorff.

\begin{theorem}\label{3} \cite{BGW} If $X$ is a normal space,
then
$|X|\le 2^{wL(X)\chi(X)}$. \end{theorem}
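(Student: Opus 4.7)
My plan is a standard closing-off argument. Setting $\kappa = wL(X)\chi(X)$, I would fix for each $x \in X$ a local base $\mathcal{V}_x$ at $x$ with $|\mathcal{V}_x| \le \chi(X) \le \kappa$, and build by transfinite recursion an increasing chain of closed sets $\{F_\alpha : \alpha < \kappa^+\}$, each of cardinality at most $2^\kappa$, satisfying the following closing-off condition: for every $\mathcal{U} \in [\bigcup_{x \in \bigcup_{\beta<\alpha} F_\beta} \mathcal{V}_x]^{\le \kappa}$ with $X \setminus \overline{\bigcup \mathcal{U}} \ne \emptyset$, I insert a point of $X \setminus \overline{\bigcup \mathcal{U}}$ into $F_\alpha$. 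Routine cardinal arithmetic keeps $|F_\alpha| \le 2^\kappa$; the union $F := \bigcup_{\alpha<\kappa^+} F_\alpha$ then has cardinality at most $2^\kappa$ and, because $\chi(X) \le \kappa < \mathrm{cf}(\kappa^+)$, is closed in $X$.

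The whole task is to show $F = X$. Suppose toward a contradiction that $p \in X \setminus F$. Since $X$ is normal and $T_1$, the disjoint closed sets $F$ and $\{p\}$ can be separated by disjoint opens, producing an open $U_1$ with $F \subseteq U_1$ and $p \notin \overline{U_1}$. For each $x \in F$ I pick $W_x \in \mathcal{V}_x$ with $W_x \subseteq U_1$ and then, by normality applied to $\{x\}$ and $X \setminus W_x$, $Y_x \in \mathcal{V}_x$ with $\overline{Y_x} \subseteq W_x$. The family $\{W_x : x \in F\} \cup \{X \setminus \bigcup_{x \in F} Y_x\}$ is an open cover of $X$ whose second member is closed. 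Applying $wL(X) \le \kappa$ and, WLOG, keeping the second member, I extract a subfamily $\{W_{x_\alpha} : \alpha < \kappa\} \cup \{X \setminus \bigcup_x Y_x\}$ with dense union. Closedness of $X \setminus \bigcup_x Y_x$ collapses density to $X = \overline{\bigcup_\alpha W_{x_\alpha}} \cup (X \setminus \bigcup_x Y_x)$, whence $F \subseteq \bigcup_x Y_x \subseteq \overline{\bigcup_\alpha W_{x_\alpha}}$. On the other hand $\overline{\bigcup_\alpha W_{x_\alpha}} \subseteq \overline{U_1}$ misses $p$, so $X \setminus \overline{\bigcup_\alpha W_{x_\alpha}}$ is nonempty. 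Regularity of $\kappa^+$ yields $\gamma < \kappa^+$ with $\{x_\alpha : \alpha < \kappa\} \subseteq F_\gamma$, and the closing-off condition produces $q \in F_{\gamma+1}$ with $q \notin \overline{\bigcup_\alpha W_{x_\alpha}}$, contradicting $q \in F \subseteq \overline{\bigcup_\alpha W_{x_\alpha}}$.

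The main obstacle is the gap between weak Lindel\"of, which only returns a subfamily with dense union, and Lindel\"of, which returns a genuine subcover. The device of manufacturing $Y_x$ with $\overline{Y_x} \subseteq W_x$, so that the residual piece $X \setminus \bigcup_x Y_x$ of the cover already equals its closure, is the whole place where normality (beyond $T_2$) is needed: it is what converts the mere density of the $wL$-subfamily into the honest inclusion $F \subseteq \overline{\bigcup_\alpha W_{x_\alpha}}$ required to activate the closing-off property. Without this step one could only conclude $F \subseteq \overline{\bigcup_\alpha W_{x_\alpha}} \cup \overline{X \setminus F}$, and the boundary of $F$ would escape the argument; removing this dependence on normality is precisely the obstruction the paper must overcome.
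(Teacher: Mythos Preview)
The paper does not actually prove Theorem~\ref{3}; it is quoted from \cite{BGW} as background, so there is no in-paper proof to compare against. Your overall closing-off strategy is the standard one and is correct in outline, but the crucial step where normality is used is flawed. You assert that $\{W_x:x\in F\}\cup\{X\setminus\bigcup_{x\in F}Y_x\}$ is an open cover of $X$, yet in the same sentence note that its second member is closed. Indeed $\bigcup_x Y_x$ is open, so $X\setminus\bigcup_x Y_x$ is closed and in general not open; the definition of $wL(X)$ applies only to \emph{open} covers, so you are not entitled to extract a $\kappa$-sized subfamily with dense union here. The subsequent line (``closedness \dots\ collapses density'') is precisely the step that needs the second member to be closed, while the preceding application of $wL$ needs it to be open---and it cannot be both.

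The repair is short and makes the auxiliary sets $Y_x$ unnecessary. Apply normality to the disjoint closed sets $F$ and $X\setminus\bigcup_{x\in F} W_x$ to obtain disjoint open sets $A\supseteq F$ and $B\supseteq X\setminus\bigcup_x W_x$. Now $\{W_x:x\in F\}\cup\{B\}$ \emph{is} an open cover of $X$; applying $wL(X)\le\kappa$ and keeping $B$ yields $\kappa$-many $W_{x_\alpha}$ with $X=\overline{\bigcup_\alpha W_{x_\alpha}}\cup\overline{B}\subseteq\overline{\bigcup_\alpha W_{x_\alpha}}\cup(X\setminus A)$, whence $F\subseteq A\subseteq\overline{\bigcup_\alpha W_{x_\alpha}}$. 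Since each $W_{x_\alpha}\subseteq U_1$ and $p\notin\overline{U_1}$, your closing-off contradiction then goes through unchanged.
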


It is still unknown whether this inequality is true for regular spaces, but in \cite{BGW} it was shown that it may fail for Hausdorff spaces. Indeed, the authors constructed Hausdorff non-regular first-countable weakly Lindel\"of spaces of arbitrarily large cardinality. 

Arhangel'ski\u\i\ \cite{A1} got closer to obtaining a common generalization of these two fundamental results by introducing a relative version of the weak Lindel\"of degree, namely  the cardinal invariant $wL_c(X)$, i.e.\ the least cardinal $\kappa $ such that for any closed set $F$ and any family of open sets $\mathcal{U}$  satisfying $F \subseteq \bigcup \mathcal{U}$ there is a subcollection $\mathcal{V} \in [\mathcal  {U}]^{\le \kappa }$ such that $F \subseteq \overline{\bigcup \mathcal{V}}$.

\begin{theorem} \cite{A1} If $X$ is a regular space, then $|X|\le 2^{wL_c(X)\chi(X)}$. 
\end{theorem}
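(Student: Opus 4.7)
The plan is the standard closing-off argument. Let $\kappa = wL_c(X)\chi(X)$, fix a neighborhood base $\{V(x,\alpha) : \alpha < \kappa\}$ at each $x \in X$, and build by transfinite recursion on $\alpha < \kappa^+$ an increasing chain of closed sets $H_\alpha \subseteq X$ with $|H_\alpha| \le 2^\kappa$, whose union will turn out to be all of $X$.

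The closing-off is tailored to $wL_c$. Writing $G_\alpha = \bigcup_{\beta < \alpha} H_\beta$, at stage $\alpha$ I would, for every family $\mathcal{V}$ of at most $\kappa$ basic open sets with centers in $G_\alpha$ such that $\overline{\bigcup \mathcal{V}} \ne X$, pick a witness $q(\mathcal{V}) \in X \setminus \overline{\bigcup \mathcal{V}}$ and put it into $H_\alpha$; then take $H_\alpha$ to be the closure of what has been collected. The inequality $|\overline{Y}| \le |Y|^{\chi(X)}$ (each limit point is coded by a net of length $\chi(X)$, using $T_2$) keeps closures at size at most $2^\kappa$, and the number of eligible families is bounded by $(2^\kappa \cdot \kappa)^\kappa = 2^\kappa$, so $|H_\alpha| \le 2^\kappa$ is preserved throughout. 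Put $H = \bigcup_{\alpha < \kappa^+} H_\alpha$; then $|H| \le 2^\kappa$, and $H$ is closed because any $p \in \overline{H}$ is the limit of at most $\chi(X) \le \kappa$ points of $H$ chosen from a local base, and by the regularity of the cardinal $\kappa^+$ such a set already lies in some $H_\alpha$, whence $p \in \overline{H_\alpha} = H_\alpha \subseteq H$.

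The payoff step is $H = X$. Suppose toward contradiction that $p \in X \setminus H$. Since $X$ is topologically regular and $H$ is closed, separate $p$ from $H$ by disjoint open sets $U \ni p$ and $W \supseteq H$; for each $x \in H$ choose a basic neighborhood $V_x \ni x$ with $V_x \subseteq W$. The family $\{V_x : x \in H\}$ covers the closed set $H$, so $wL_c(X) \le \kappa$ produces some $A \in [H]^{\le \kappa}$ with $H \subseteq \overline{\bigcup_{x \in A} V_x}$. Since $\bigcup_{x \in A} V_x \subseteq W$ is disjoint from the open set $U$, its closure misses $U$ and therefore misses $p$, so $\overline{\bigcup_{x \in A} V_x} \ne X$. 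By cofinality, $A$ fits into some $G_{\alpha+1}$, so $\mathcal{V} = \{V_x : x \in A\}$ was eligible at stage $\alpha+1$ and produced a witness $q(\mathcal{V}) \in H_{\alpha+1} \subseteq H$ lying outside $\overline{\bigcup \mathcal{V}}$, contradicting $H \subseteq \overline{\bigcup \mathcal{V}}$.

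The genuine obstacle here is that closure does not commute with unions: knowing $p \notin \overline{V_x}$ for each $x$ individually would not force $p \notin \overline{\bigcup_{x \in A} V_x}$. That is why one has to extract a single open neighborhood $W$ of $H$ disjoint from some open $U \ni p$, and that is exactly the point where the separation axiom bites. In the Bell-Ginsburg-Woods setup the use of the ``absolute'' $wL$ forced them to separate two disjoint closed sets, hence to assume normality; the switch to $wL_c$ allows the cover to be applied to the closed set $H$ directly, so the weaker separation of a point from a closed set, i.e.\ regularity, suffices.
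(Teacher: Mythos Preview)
The paper does not actually prove this theorem: it is stated in the introduction with the citation \cite{A1} and later invoked as a black box in the proof of Corollary~\ref{regcor}. So there is no in-paper argument to compare against.

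That said, your sketch is essentially the classical Pol--\v{S}apirovski\u{\i} closing-off proof of Arhangel'ski\u\i's inequality, and it is correct. The two places where regularity and $wL_c$ are used are handled properly: regularity provides the single open $W\supseteq H$ disjoint from an open $U\ni p$, and then the observation that a set disjoint from the open set $U$ has closure disjoint from $U$ gives $p\notin\overline{\bigcup_{x\in A}V_x}$ in one stroke, avoiding the ``closure does not commute with union'' pitfall you correctly flag. One cosmetic omission: you never initialize the recursion (e.g.\ $H_0=\overline{\{x_0\}}$ for some $x_0\in X$), but this is routine.
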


O. Alas \cite{Al} showed that the previous inequality continues to hold for Urysohn spaces, but it is still open whether it's true for Hausdorff spaces.

In \cite{Ag} Arhangel'skii made another step ahead by introducing the notion of strict quasi-Lindel\"of degree, which allowed him to give a common refinement of \emph{the countable case} of his 1969 theorem and the Hajnal-Juh\'asz inequality. He defined a space $X$ to be \emph{strict quasi-Lindel\"of} if for every closed subset $F$ of $X$, for every open cover $\mathcal{U}$ of $F$ and for every countable decomposition $\{\mathcal{U}_n: n < \omega \}$ of $\mathcal{U}$ there are countable subfamilies $\mathcal{V}_n \subset \mathcal{U}_n$, for every $n< \omega$ such that $F \subset \bigcup \{\overline{\bigcup \mathcal{V}_n}: n < \omega \}$. It is easy to see that every Lindel\"of space is strict quasi-Lindel\"of and every ccc space is strict-quasi Lindel\"of. Arhangel'skii proved that every strict quasi-Lindel\"of first-countable space has cardinality at most continuum.

However, Arhangel'skii's approach cannot be extended to higher cardinals. Indeed, it's not even clear whether $|X| \leq 2^{\chi(X)}$ is true for every strict quasi-Lindel\"of space $X$. This inspired us to introduce the following cardinal invariants:

\begin{definition} {\ \\}
\begin{itemize}
\item The \emph{piecewise weak Lindel\"of degree of $X$} ($pwL(X)$) is defined as the minimum cardinal $\kappa$ such that for every open cover $\mathcal{U}$ of $X$ and every decomposition $\{\mathcal{U}_i: i \in I \}$ of $\mathcal{U}$, there are $\leq \kappa$-sized families $\mathcal{V}_i \subset \mathcal{U}_i$, for every $i \in I$ such that $X \subset \bigcup \{\overline{\bigcup \mathcal{V}_i}: i \in I \}$. 
\item The \emph{piecewise weak Lindel\"of degree for closed sets of $X$} ($pwL_c(X)$)  is defined as the minimum cardinal $\kappa$ such that for every closed set $F \subset X$, for every open family $\mathcal{U}$ covering $F$ and for every decomposition $\{\mathcal{U}_i: i \in I \}$ of $\mathcal{U}$, there are $\leq \kappa$-sized subfamilies $\mathcal{V}_i \subset \mathcal{U}_i$ such that $F \subset \bigcup \{\overline{\bigcup \mathcal{V}_i}: i \in I \}$.
\end{itemize}
\end{definition}

As a corollary to our main result, we will obtain the following bound, which is the desired common extension of Arhangel'skii's Theorem and the Hajnal-Juh\'asz inequality.

\begin{theorem}
For every Hausdorff space $X$, $|X| \leq 2^{pwL_c(X) \cdot \chi(X)}$.
\end{theorem}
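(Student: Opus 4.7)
The plan is a closing-off argument in the style of Arhangel'ski\u\i\ and Pol. Set $\kappa=pwL_c(X)\cdot\chi(X)$, and for each $x\in X$ fix a local base $\mathcal{B}_x$ with $|\mathcal{B}_x|\leq\kappa$. Build recursively an increasing chain $\{H_\alpha:\alpha<\kappa^+\}$ of closed subsets of $X$, each of cardinality at most $2^\kappa$, satisfying the following catching property at stage $\alpha$: whenever $\{\mathcal{W}_i:i\in I\}$ is a family with $|I|\leq\kappa$ and each $\mathcal{W}_i$ is a collection of at most $\kappa$ open sets drawn from $\bigcup\{\mathcal{B}_y:y\in\bigcup_{\beta<\alpha}H_\beta\}$, and $X\setminus\bigcup_{i}\overline{\bigcup\mathcal{W}_i}\neq\emptyset$, then $H_\alpha$ meets that complement. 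The construction stays within cardinality $2^\kappa$ because there are at most $(2^\kappa)^\kappa=2^\kappa$ such configurations at each stage, and the closure operation used to keep $H_\alpha$ closed is controlled by the Hausdorff bound $|\overline{A}|\leq|A|^{\chi(X)}$ (each point of $\overline{A}$ is the unique limit in $T_2$ of a net in $A$ indexed by a $\leq\kappa$-sized directed subset of $A$, and the number of such pairs is bounded by $|A|^\kappa\cdot 2^\kappa$).

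Let $H=\bigcup_{\alpha<\kappa^+}H_\alpha$, so $|H|\leq 2^\kappa$, and $H$ is closed: any of its limit points is the unique limit of a $\leq\kappa$-sized net in $H$ which, by regularity of $\kappa^+$, lies inside some $H_\alpha$. Suppose, for contradiction, that $x\in X\setminus H$. Using Hausdorffness together with the fixed local bases, for each $y\in H$ pick $V_y\in\mathcal{B}_y$ and $U_y\in\mathcal{B}_x$ with $V_y\cap U_y=\emptyset$. The family $\{V_y:y\in H\}$ covers the closed set $H$, and the map $y\mapsto U_y$ partitions it into the decomposition $\mathcal{U}_U=\{V_y:U_y=U\}$ indexed by $U\in\mathcal{B}_x$, yielding at most $\kappa$ pieces. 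Applying $pwL_c(X)\leq\kappa$ produces subfamilies $\mathcal{V}_U\in[\mathcal{U}_U]^{\leq\kappa}$ with $H\subseteq\bigcup_{U\in\mathcal{B}_x}\overline{\bigcup\mathcal{V}_U}$.

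The payoff is that every $V\in\mathcal{V}_U$ is disjoint from the open set $U$, so $U\cap\overline{\bigcup\mathcal{V}_U}=\emptyset$; since $x\in U$ for each $U\in\mathcal{B}_x$, this gives $x\notin\bigcup_{U}\overline{\bigcup\mathcal{V}_U}$, and the complement of this set in $X$ is nonempty. Because the $\mathcal{V}_U$'s together involve at most $\kappa$ points of $H$, the regularity of $\kappa^+$ locates some $\alpha<\kappa^+$ with all these points in $H_\alpha$. Then $\{\mathcal{V}_U:U\in\mathcal{B}_x\}$ is one of the configurations caught at stage $\alpha+1$, so there is some $z\in H_{\alpha+1}\subseteq H$ with $z\notin\bigcup_{U}\overline{\bigcup\mathcal{V}_U}$, contradicting $H\subseteq\bigcup_{U}\overline{\bigcup\mathcal{V}_U}$.

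The step I expect to require the most care is the recursive construction, where one must verify that only $2^\kappa$ configurations need to be handled at each stage and that taking closures at every step does not inflate the cardinality; both reduce to the Hausdorff closure bound, which is what makes the argument go through under only $T_2$. The conceptually decisive move is the decomposition indexed by $\mathcal{B}_x$ at the exterior point $x$: this is exactly where the piecewise strengthening of the weak Lindel\"of condition gets used, converting mere Hausdorff separation of $x$ from each $y\in H$ into the closure-avoidance property that survives the $pwL_c$ compression and forces the final contradiction.
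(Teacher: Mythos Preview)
Your argument is correct. The decisive combinatorial step---indexing the cover of the closed set $H$ by the local base $\mathcal{B}_x$ at an external point and then applying $pwL_c(X)\le\kappa$ to that decomposition so that each piece misses a fixed neighbourhood of $x$---is exactly the idea the paper uses (in Claim~2 of the proof of its main theorem). Your closing-off recursion is the standard Pol-style translation of the paper's $\kappa$-closed elementary submodel; the ``catching'' condition on the $H_\alpha$'s plays the role of elementarity, and the closure bound $|\overline{A}|\le |A|^{\chi(X)}$ for Hausdorff $X$ replaces the fact that $M$ is $\kappa$-closed of size $2^\kappa$. The one organisational difference is that the paper does not prove the cardinality bound directly: it first establishes the more general inequality $wL(X^c_\kappa)\le 2^\kappa$ under the weaker hypotheses $t(X)\cdot pwL_c(X)\le\kappa$ together with a dense set of points of character $\le\kappa$, and then observes that when $\chi(X)\le\kappa$ the space $X^c_\kappa$ is discrete, so $wL(X^c_\kappa)=|X|$. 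Your direct route bypasses this extra generality but is entirely adequate for the stated corollary.
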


For undefined notions we refer to \cite{En}. Our notation regarding cardinal functions mostly follows \cite{HChapter}. To state our proofs in the most elegant and compact way we use the language of elementary submodels, which is well presented in \cite{Dow}.

\section{A cardinal bound for the $G_\delta$-modification}

The following proposition collects a few simple general facts about the piecewise weak Lindel\"of number which will be helpful in the proof of the main theorem.

\begin{proposition} \label{prop}

For any space $X$ we have:

\begin{enumerate}
\item $pwL(X) \leq pwL_c(X)$.
\item $pwL_c(X)\leq L(X)$. 
\item $pwL_c(X)\le c(X)$.
\item \label{reg} If $X$ is $T_3$ then $wL_c(X) \leq pwL(X)$.
\end{enumerate}
\end{proposition}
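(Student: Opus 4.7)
The plan is to handle the four parts in order, with the first three being essentially routine and (4) requiring a carefully tailored decomposition. Part (1) is immediate from the definitions: since $X$ itself is closed in $X$, applying the conclusion of $pwL_c(X)\le\kappa$ to $F=X$ is precisely the conclusion of $pwL(X)\le\kappa$. For part (2), given closed $F$, an open family $\mathcal{U}$ covering $F$, and a decomposition $\{\mathcal{U}_i\}$, I would adjoin $X\setminus F$ to $\mathcal{U}$, apply $L(X)\le\kappa$ to extract a $\le\kappa$-sized subcover of $X$, then discard $X\setminus F$ to obtain $\mathcal{W}\subset\mathcal{U}$ of size $\le L(X)$ still covering $F$. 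Setting $\mathcal{V}_i=\mathcal{W}\cap\mathcal{U}_i$ then gives $F\subseteq\bigcup_i\bigcup\mathcal{V}_i\subseteq\bigcup_i\overline{\bigcup\mathcal{V}_i}$.

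For part (3), I would work piece by piece. For each $i$, let $Y_i=\bigcup\mathcal{U}_i$, which is open in $X$, so $c(Y_i)\le c(X)$. Combined with the classical inequality $wL(Y)\le c(Y)$ (a routine transfinite induction: any open cover with no $\le\kappa$-sized subcollection with dense union yields, by repeatedly choosing points and sets outside previously built closures, a pairwise disjoint family of size $\kappa^+$), this produces a $\le c(X)$-sized $\mathcal{V}_i\subset\mathcal{U}_i$ whose union is dense in $Y_i$, so $\overline{\bigcup\mathcal{V}_i}\supseteq Y_i$. Summing over $i$ gives $F\subseteq\bigcup\mathcal{U}\subseteq\bigcup_i\overline{\bigcup\mathcal{V}_i}$.

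Part (4) is the main obstacle, since we must derive a bound on a closed-set invariant from a whole-space invariant and must genuinely exploit $T_3$. My plan: given closed $F$ and an open family $\mathcal{U}$ covering $F$, use regularity to pick, for each $y\in X\setminus F$, an open neighbourhood $M_y$ of $y$ with $\overline{M_y}\cap F=\emptyset$. Then $\mathcal{W}=\mathcal{U}\cup\{M_y:y\in X\setminus F\}$ is an open cover of $X$. The crucial move is the choice of decomposition: take $\mathcal{U}$ as one piece, but make each singleton $\{M_y\}$ its own separate piece. Applying $pwL(X)\le\kappa$ produces a $\le\kappa$-sized $\mathcal{V}_*\subset\mathcal{U}$ together with a selection $\mathcal{V}_y\in\{\emptyset,\{M_y\}\}$ for each $y$, satisfying $X\subseteq\overline{\bigcup\mathcal{V}_*}\cup\bigcup_y\overline{\bigcup\mathcal{V}_y}$. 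Because each singleton piece contributes only the single closure $\overline{M_y}$, not the closure of any large union, the second term equals $\bigcup_{y\in S}\overline{M_y}$ for some $S\subseteq X\setminus F$, and this set is disjoint from $F$. Hence $F\subseteq\overline{\bigcup\mathcal{V}_*}$ and $\mathcal{V}_*$ is the required witness. The subtle conceptual obstacle I anticipate is precisely this: a naive two-piece decomposition lumping all $M_y$'s into a single piece would fail, because $\overline{\bigcup_y M_y}$ can perfectly well accumulate on $\partial F$; atomising the $M_y$'s into singleton pieces is what converts the pointwise consequence of regularity into a collective one.
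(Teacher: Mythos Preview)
Your proof is correct and follows essentially the same approach as the paper in all four parts. For (3) the paper writes out the maximal-antichain argument directly rather than citing $wL\le c$ on each piece, and for (4) it uses exactly your decomposition of $\mathcal{U}$ as one piece together with each $\{M_y\}$ as a singleton piece, drawing the same conclusion that $\bigcup_y\overline{M_y}$ misses $F$.
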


\begin{proof} 

The first two items are trivial. To prove the third item, let $F$ be a closed subset of  $X$ and $\mathcal{V}=\bigcup\{\mathcal{V}_i: i \in I \}$ be an open collection
satisfying $F \subseteq \bigcup \mathcal{V}$. Suppose $c(X) \leq \kappa $.  For every $i \in I$  let $\mathcal{C}_i$ be a maximal collection of pairwise disjoint non-empty open subsets of $X$ such that  for each $C \in \mathcal{C}_i$ there is some $V_C \in \mathcal{V}_i $ with $C \subseteq V_C$.  By letting $\mathcal{W}_i =\{V_C:C \in \mathcal{C}_i \}$, the maximality of  $\mathcal{C}_i$ implies that $\bigcup \mathcal{V}_i \subseteq \overline{\bigcup \mathcal{W}_i }$ and so $F \subseteq \bigcup\{\overline {\cup \mathcal{W}_i }: i \in I \}$. Since $|\mathcal{W}_i | \leq |\mathcal{C}_i | \leq \kappa $, we have $pwL_c(X) \leq \kappa$. 

To prove the fourth item assume $X$ is a regular space and let $\kappa$ be a cardinal such that $pwL(X) \leq \kappa$. Let $F$ be a closed subset of $X$ and $\mathcal{U}$ be an open cover of $F$. If $\mathcal{U}$ covers $X$ we're done. Otherwise use regularity to choose, for every $p \in X \setminus \bigcup \mathcal{U}$ an open set $U_p$ such that $p \in U_p$ and $F \cap \overline{U}_p=\emptyset$. Note that $\mathcal{U} \cup \{U_p: p \in X \setminus F \}$ is an open cover of $X$, so by $pwL(X) \leq \kappa$, there is a $\kappa$-sized subfamily $\mathcal{V}$ of $\mathcal{U}$ such that $X  \subset  \overline{\bigcup \mathcal{V}} \cup \bigcup \{\overline{U_p}: p \in X \setminus F \}$. Hence $F \subset \overline{\bigcup \mathcal{V}}$ and we are done.
\end{proof}

\begin{corollary} \label{regcor}
If $X$ is a regular space then $|X| \leq 2^{pwL(X) \cdot \chi(X)}$.
\end{corollary}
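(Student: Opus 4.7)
The plan is to chain two results already in our possession: the Arhangel'ski\u\i\ inequality stated earlier in the excerpt as Theorem 4, namely $|X|\le 2^{wL_c(X)\chi(X)}$ for regular spaces, and item (\ref{reg}) of the preceding Proposition \ref{prop}, which says that $wL_c(X)\le pwL(X)$ whenever $X$ is $T_3$. Both hypotheses match the hypothesis of the corollary, so they can be combined directly.

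More explicitly, assuming $X$ is regular, I would first invoke Proposition \ref{prop}(\ref{reg}) to deduce $wL_c(X)\le pwL(X)$. Since the function $\kappa\mapsto 2^{\kappa\cdot\chi(X)}$ is monotone in $\kappa$, this yields
\[
2^{wL_c(X)\cdot\chi(X)} \le 2^{pwL(X)\cdot\chi(X)}.
\]
Then, applying the Arhangel'ski\u\i\ bound $|X|\le 2^{wL_c(X)\chi(X)}$ (Theorem 4 above) to the regular space $X$ gives
\[
|X|\le 2^{wL_c(X)\cdot\chi(X)}\le 2^{pwL(X)\cdot\chi(X)},
\]
which is the desired estimate.

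Since the real work has already been carried out in Proposition \ref{prop}(\ref{reg}) and in the cited theorem of Arhangel'ski\u\i, there is no genuine obstacle here; the corollary is essentially a one-line deduction. The only thing worth flagging is that this bound is sharper than what one obtains by specializing the paper's main theorem $|X|\le 2^{pwL_c(X)\chi(X)}$ to the regular case, since $pwL(X)\le pwL_c(X)$ by Proposition \ref{prop}(1), so the corollary genuinely uses regularity to replace $pwL_c$ by the a priori smaller $pwL$.
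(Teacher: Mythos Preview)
Your proof is correct and matches the paper's own argument essentially verbatim: the paper simply writes ``Combine Proposition~\ref{prop}(\ref{reg}) and Arhangel'ski\u\i's result that $|X|\le 2^{wL_c(X)\cdot\chi(X)}$ for every regular space $X$.'' Your additional remark about why this is genuinely sharper than specializing Corollary~\ref{maincor} to the regular case is a nice observation, though not part of the paper's proof.
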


\begin{proof}
Combine Proposition $\ref{prop}$, ($\ref{reg}$) and Arhangel'skii's result that $|X| \leq 2^{wL_c(X) \cdot \chi(X)}$ for every regular space $X$.
\end{proof}

We state our main theorem in terms of the \emph{$G_\kappa$-modification} of a space. Let $\kappa$ be a cardinal number. By $X_\kappa$ we denote the topology on $X$ generated by $\kappa$-sized intersections of open sets of $X$. We call $X_\kappa$, the \emph{$G_\kappa$-modification of $X$}; in case $\kappa=\omega$ we speak of the $G_\delta$-modification of $X$ and we often use the symbol $X_\delta$ instead. This construction has been extensively studied in the literature; various authors have tried to bound the cardinal functions of $X_\kappa$ in terms of their values on $X$ (see, for example \cite{BS}, \cite{FW}, \cite{J}, \cite{S}, \cite{SSz}) and results of this kind have found applications to other topics in topology, like the estimation of the cardinality of compact homogeneous spaces (see \cite{Agd}, \cite{BS}, \cite{CPR} and \cite{SSz}).

By $X^c_\kappa$ we denote the topology on $X$ generated by $G^c_\kappa$-sets, that is those subsets $G$ of $X$ such that there is a family $\{U_\alpha: \alpha < \kappa \}$ of open sets with $G=\bigcap \{U_\alpha: \alpha < \kappa\}=\bigcap \{\overline{U_\alpha}: \alpha < \kappa\}$. In general, the topology of $X^c_\kappa$ is coarser than the $G_\kappa$-modification of $X$, but if $X$ is a regular space then $X^c_\kappa=X_\kappa$.

\begin{theorem} \label{mainthm}
Let $X$ be a Hausdorff space such that $t(X) \cdot pwL_c(X) \leq \kappa$ and $X$ has a dense set of points of character $\leq \kappa$. Then $wL(X^c_\kappa) \leq 2^\kappa$.
\end{theorem}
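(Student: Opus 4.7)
The plan is to use the elementary submodel method. Let $\mathcal{U}$ be an arbitrary open cover of $X^c_\kappa$; without loss of generality, each $U \in \mathcal{U}$ is a basic $G^c_\kappa$-open set, and we fix a representation $U = \bigcap_{\alpha<\kappa} U(\alpha) = \bigcap_{\alpha<\kappa}\overline{U(\alpha)}^X$ by $X$-open sets. Take $M \prec H(\theta)$ for $\theta$ large enough, with $|M| = 2^\kappa$, $[M]^{\leq\kappa} \subseteq M$, $\kappa + 1 \subseteq M$, and with $X$, $\tau_X$, $\mathcal{U}$, the representation function $U \mapsto (U(\alpha))_{\alpha<\kappa}$, a dense set $D$ of points of character $\leq \kappa$, and a choice of local base $\mathcal{B}_d \in [\tau_X]^{\leq\kappa}$ for each $d \in D$, all in $M$. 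Since $|\mathcal{U} \cap M| \leq 2^\kappa$, it suffices to show $\bigcup(\mathcal{U} \cap M)$ is dense in $X^c_\kappa$.

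I would reduce this to showing $X \cap M$ is dense in $X^c_\kappa$: for each $m \in X \cap M$, elementarity (applied to the fact that $\mathcal{U}$ is a cover) supplies $V_m \in \mathcal{U} \cap M$ with $m \in V_m$, so $X \cap M \subseteq \bigcup(\mathcal{U} \cap M)$. Density of $X \cap M$ in $X^c_\kappa$ amounts to showing that every non-empty basic open $W = \bigcap W_\alpha = \bigcap \overline{W_\alpha}$ meets $M$.

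The central tool is $pwL_c(X) \leq \kappa$. The family $\{U(\alpha) : U \in \mathcal{U},\ \alpha < \kappa\}$ is an open cover of $X$ in the original topology (since each $U \subseteq U(\alpha)$) and is naturally partitioned by the coordinate $\alpha$. By $pwL_c$, there are subfamilies $\mathcal{V}_\alpha \subseteq \{U(\alpha) : U \in \mathcal{U}\}$ with $|\mathcal{V}_\alpha| \leq \kappa$ and $X = \bigcup_{\alpha<\kappa} \overline{\bigcup \mathcal{V}_\alpha}^X$; by elementarity the witnessing function $\alpha \mapsto \mathcal{V}_\alpha$ can be chosen inside $M$, and the closure properties of $M$ (together with $\kappa+1 \subseteq M$) then force the underlying subfamily of $\mathcal{U}$ to lie in $\mathcal{U} \cap M$. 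For any $x \in W$ we obtain $\alpha_0 < \kappa$ with $x \in \overline{\bigcup \mathcal{V}_{\alpha_0}}^X$, so the $X$-open neighborhood $W_{\alpha_0}$ of $x$ meets some shadow $U(\alpha_0)$ with $U \in \mathcal{U} \cap M$. Density of $D$ then produces a small-character point inside the nonempty $X$-open set $W_{\alpha_0} \cap U(\alpha_0)$, and the interplay of tightness $t(X) \leq \kappa$ and the fact that such a point's local base lies in $M$ allows one to iterate this refinement over the $\kappa$ coordinates, accumulating an $M$-approximating $\kappa$-sequence whose limit witnesses a point of $W \cap M$.

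The main obstacle is precisely this last bridging step: a single application of $pwL_c$ controls only one coordinate at a time, with closures taken in the coarse topology $X$, whereas membership in the basic $G^c_\kappa$-open $W$ requires simultaneous control over all $\kappa$ coordinates. Overcoming this requires a careful orchestration between tightness (which localizes closures to $\kappa$-sized sets), the dense set of small-character points (which supplies $G^c_\kappa$-isolated witnesses inside $X$-open pieces), and the $\kappa$-closure of $M$ (which absorbs $\kappa$-sized data into $M$). This is the non-trivial content of the theorem, lifting the purely $X$-topological information furnished by $pwL_c$ to a density statement in the finer space $X^c_\kappa$.
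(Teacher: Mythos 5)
Your plan fails at the reduction step: you propose to show that $X \cap M$ is dense in $X^c_\kappa$, but this is false under the theorem's hypotheses, so no amount of work on the ``bridging step'' can complete it. If it were true it would give $d(X^c_\kappa) \leq 2^\kappa$, strictly stronger than $wL(X^c_\kappa) \leq 2^\kappa$, and this stronger bound fails: let $X$ be the one-point compactification of a discrete set of size $\lambda > 2^{\mathfrak{c}}$ and $\kappa = \omega$. Then $t(X)=\omega$, $pwL_c(X) \leq L(X) = \omega$, and the isolated points form a dense set of points of countable character, so the hypotheses hold; but each isolated point is clopen, hence a $G^c_\omega$-set, hence isolated in $X^c_\omega$, so any dense subset of $X^c_\omega$ must contain all $\lambda$ isolated points, while $|X \cap M| \leq 2^\omega$ (in fact $X \cap M$ is not even dense in $X$). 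What makes the theorem true is that the members of $\mathcal{U} \cap M$ are \emph{fat}: in this example the member of $\mathcal{U} \cap M$ containing the point at infinity is co-countable and absorbs almost all the isolated points missing from $M$. The paper proves exactly this absorption as its Claim~1: given $x \in \overline{X \cap M}$ and $F \in \mathcal{U}$ with $x \in F$, tightness gives $S \in [X \cap M]^{\leq \kappa}$ with $x \in \overline{S}$, $\kappa$-closedness of $M$ gives $B = \bigcap_{\alpha<\kappa}\overline{U_\alpha \cap S} \in M$ with $x \in B \subseteq F$, and reflecting $(\exists G \in \mathcal{U})(x \in B \subseteq G)$ produces $G \in \mathcal{U} \cap M$ containing $x$. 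Thus $\mathcal{U} \cap M$ covers the whole closure $\overline{X \cap M}$, not merely $X \cap M$, and the target becomes density of $\bigcup(\mathcal{U} \cap M)$, not of $X \cap M$.

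The second gap is the one you flag yourself, and it stems from decomposing the shadow cover by the coordinate index $\alpha$ of the representation: this yields one coordinate at a time, with closures in the coarse topology, and your proposed iteration over $\kappa$ coordinates is not an argument (by the example above it cannot produce a point of $W \cap M$). The paper decomposes differently and argues by contradiction rather than by direct construction. Supposing $\bigcup(\mathcal{U}\cap M)$ is not dense, the dense set of small-character points supplies a point $p \notin \overline{\bigcup(\mathcal{U} \cap M)}$ with a local base $\{V_\beta : \beta < \kappa\}$; for each $x \in \overline{X \cap M}$, Claim~1 gives $F_x \in \mathcal{U} \cap M$ with $x \in F_x$, and since $p \notin F_x = \bigcap_\alpha \overline{U_\alpha(F_x)}$, some shadow $U_\alpha(F_x)$ is disjoint from some $V_\beta$. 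The decomposition of the resulting cover $\mathcal{C} \subseteq M$ of the \emph{closed set} $\overline{X \cap M}$ is indexed by this $\beta$, namely $\mathcal{U}_\beta = \{U : U \cap V_\beta = \emptyset\}$; applying $pwL_c(X) \leq \kappa$ to the closed set $\overline{X \cap M}$ (this is where ``for closed sets'' earns its keep) produces $\kappa$-sized $\mathcal{V}_\beta$ with $\overline{X \cap M} \subseteq \bigcup_\beta \overline{\bigcup \mathcal{V}_\beta}$ and, by design, $p \notin \overline{\bigcup \mathcal{V}_\beta}$ for every $\beta$. Since the sequence $\{\overline{\bigcup \mathcal{V}_\beta} : \beta < \kappa\}$ lies in $M$ by $\kappa$-closedness and covers $X \cap M$, elementarity upgrades it to a cover of all of $X$, contradicting the displaced point $p$. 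Note how the hypotheses are deployed: tightness only in Claim~1, and the dense set of small-character points only to choose $p$ --- quite different roles from the ones your sketch assigns them.
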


\begin{proof}
Let $\mathcal{F}$ be a cover of $X$ by $G^c_\kappa$-sets. Let $\theta$ be a large enough regular cardinal and $M$ be a $\kappa$-closed elementary submodel of $H(\theta)$ such that $|M|=2^\kappa$ and $M$ contains everything we need (that is, $X, \mathcal{F} \in M$, $\kappa+1 \subset M$ etc...).

For every $F \in \mathcal{F}$ choose open sets $\{U_\alpha: \alpha < \kappa \}$ such that $F=\bigcap \{U_\alpha: \alpha < \kappa \}=\bigcap \{\overline{U_\alpha}: \alpha < \kappa \}$.

\medskip

\noindent {\bf Claim 1.} $\mathcal{F} \cap M$ covers $\overline{X \cap M}$.

\begin{proof}[Proof of Claim 1] Let $x \in \overline{X \cap M}$. Since $\mathcal{F}$ is a cover of $X$ we can find a set $F \in \mathcal{F}$ such that $x \in F$. Moreover, using $t(X) \leq \kappa$, we can find a $\kappa$-sized subset $S$ of $X \cap M$ such that $x \in \overline{S}$. Note that $x \in \overline{U_\alpha \cap S}$, for every $\alpha < \kappa$. Moreover, by $\kappa$-closedness of $M$, the set $U_\alpha \cap S$ belongs to $M$. Set $B=\bigcap \{\overline{U_\alpha \cap S}: \alpha < \kappa\}$. Note that $x \in B \subset F$ and $B \in M$. Therefore $ H(\theta) \models (\exists G \in \mathcal{F})(x \in B \subset G)$ and all the free variables in the previous formula belong to $M$. Therefore, by elementarity we also have that $M \models (\exists G \in \mathcal{F})(x \in B \subset G)$ and hence there exists a set $G \in \mathcal{F} \cap M$ such that $x \in G$, which is what we wanted to prove.

\renewcommand{\qedsymbol}{$\triangle$}
\end{proof}

\noindent {\bf Claim 2.} $\mathcal{F} \cap M$ has dense union in $X$.

\begin{proof}[Proof of Claim 2] Suppose by contradiction that $X \nsubseteq \overline{\bigcup (\mathcal{F} \cap M)}$. Then we can fix a point $p \in X \setminus \overline{\bigcup (\mathcal{F} \cap M)}$ such that $\chi(p,X) \leq \kappa$. Let $\{V_\alpha: \alpha < \kappa \}$ be a local base at $p$.

For every $F \in \mathcal{F} \cap M$, let $\{U_\alpha(F): \alpha < \kappa \} \in M$ be a sequence of open sets such that $F=\bigcap \{U_\alpha(F): \alpha < \kappa \}=\bigcap \{\overline{U_\alpha(F)}: \alpha < \kappa \}$. Note that $\{U_\alpha(F): \alpha < \kappa \} \subset M$. Let $\mathcal{C}=\{U_\alpha(F): F \in \mathcal{F} \cap M, \alpha < \kappa \}$. Note that $\mathcal{C}$ is an open cover of $\overline{X \cap M}$ and $\mathcal{C} \subset M$.

For every $x \in \overline{X \cap M}$, we can choose, using Claim 1, a set $F_x \in \mathcal{F} \cap M$ such that $x \in F_x$. Since $p \notin F_x$, there is $\alpha < \kappa$ such that $p \notin \overline{U_\alpha(F_x)}$. Hence we can find an ordinal $\beta_x < \kappa$ such that $V_{\beta_x} \cap U_\alpha(F_x)=\emptyset$. This shows that $\mathcal{U}=\{U \in \mathcal{C}: (\exists \beta < \kappa)(U \cap V_\beta=\emptyset) \}$ is an open cover of $\overline{X \cap M}$. Let $\mathcal{U}_\alpha=\{U \in \mathcal{U}: U \cap V_\alpha=\emptyset\}$. Then $\{\mathcal{U}_\alpha: \alpha < \kappa \}$ is a decomposition of $\mathcal{U}$ and hence we can find a $\kappa$-sized family $\mathcal{V}_\alpha \subset \mathcal{U}_\alpha$ for every $\alpha < \kappa$ such that $\overline{X \cap M} \subset \bigcup \{\overline{\bigcup \mathcal{V}_\alpha}: \alpha < \kappa \}$. Note that by $\kappa$-closedness of $M$ the sequence $\{\overline{\bigcup \mathcal{V}_\alpha}: \alpha < \kappa \}$ belongs to $M$ and hence the previous formula implies that:

$$M \models X \subset \bigcup \{\overline{\bigcup \mathcal{V}_\alpha}: \alpha < \kappa \}$$

So, by elementarity:

$$H(\theta) \models X \subset \bigcup \{\overline{\bigcup \mathcal{V}_\alpha}: \alpha < \kappa \}$$

But that is a contradiction, because $p \notin \overline{\bigcup \mathcal{V}_\alpha}$, for every $\alpha < \kappa$.

\renewcommand{\qedsymbol}{$\triangle$}
\end{proof}

Since $|\mathcal{F} \cap M| \leq 2^\kappa$, Claim 2 proves that $wL(X^c_\kappa) \leq 2^\kappa$, as we wanted.

\end{proof}

As a first consequence, we derive the desired common extension of Arhangel'skii's Theorem and the Hajnal-Juh\'asz inequality.

Recall that the \emph{closed pseudocharacter of the point $x$ in $X$} ($\psi_c(x,X)$) is defined as the minimum cardinal $\kappa$ such that there is a $\kappa$-sized family $\{ U_\alpha: \alpha < \kappa\}$ of open neighbourhoods of $x$ with $\bigcap \{\overline{U_\alpha}: \alpha < \kappa \}=\{x\}$. The closed pseudocharacter of $X$ ($\psi_c(X)$) is then defined as $\psi_c(X)= \sup \{\psi_c(x,X): x \in X \}$.

\begin{corollary} \label{maincor}
Let $X$ be a Hausdorff space. Then $|X| \leq 2^{pwL_c(X) \cdot \chi(X)}$.
\end{corollary}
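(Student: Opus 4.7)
The plan is to deduce this from Theorem \ref{mainthm} by taking $\kappa=pwL_c(X)\cdot\chi(X)$ and showing that the $G^c_\kappa$-modification $X^c_\kappa$ is actually discrete, so that the bound $wL(X^c_\kappa)\le 2^\kappa$ collapses to a bound on $|X|$.

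First I would verify the hypotheses of Theorem \ref{mainthm} for this $\kappa$. Since $t(X)\le\chi(X)\le\kappa$ and $pwL_c(X)\le\kappa$, we have $t(X)\cdot pwL_c(X)\le\kappa$, and trivially every point (hence a dense set of points) has character at most $\kappa$. The theorem therefore yields $wL(X^c_\kappa)\le 2^\kappa$.

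The main step is to observe that Hausdorffness plus $\chi(X)\le\kappa$ makes every singleton a $G^c_\kappa$-set. Fix $x\in X$ and a local base $\{V_\alpha:\alpha<\kappa\}$ at $x$. For any $y\neq x$, Hausdorffness gives disjoint open neighborhoods of $x$ and $y$, and shrinking inside the one around $x$ to some $V_\alpha$ shows $y\notin\overline{V_\alpha}$. Hence $\bigcap_{\alpha<\kappa}\overline{V_\alpha}=\{x\}$, which forces $\bigcap_{\alpha<\kappa}V_\alpha=\{x\}$ as well (both intersections are sandwiched between $\{x\}$ and $\bigcap\overline{V_\alpha}$). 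Thus $\{x\}$ is a $G^c_\kappa$-set, so it is open in $X^c_\kappa$, and $X^c_\kappa$ is discrete.

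Finally, for a discrete space $Y$ one has $wL(Y)=|Y|$, since in the open cover by singletons the closure operator is trivial and any dense subfamily must contain every point. Applying this to $Y=X^c_\kappa$ gives $|X|=wL(X^c_\kappa)\le 2^\kappa=2^{pwL_c(X)\cdot\chi(X)}$, as desired. No step is really an obstacle — the only conceptual point is recognizing that the two-sided condition in the definition of a $G^c_\kappa$-set (intersection of opens equals intersection of their closures equals the set) comes for free once one has produced closures meeting in $\{x\}$, which is exactly what Hausdorffness provides.
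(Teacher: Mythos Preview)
Your proof is correct and follows essentially the same route as the paper: apply Theorem~\ref{mainthm} with $\kappa=pwL_c(X)\cdot\chi(X)$, observe that in a Hausdorff space with $\chi(X)\le\kappa$ every singleton is a $G^c_\kappa$-set (the paper phrases this as $\psi_c(X)\le\chi(X)$), and conclude that $X^c_\kappa$ is discrete so that $wL(X^c_\kappa)=|X|$. You have simply spelled out in detail what the paper condenses into one line.
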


\begin{proof}
It suffices to note that in a Hausdorff space $\psi_c(X) \cdot t(X) \leq \chi(X)$ and hence if $\kappa$ is a cardinal such that $\chi(X) \leq \kappa$ then $X^c_\kappa$ is a discrete set. Thus $wL(X^c_\kappa) \leq 2^\kappa$ if and only if $|X|=|X^c_\kappa| \leq 2^\kappa$.
\end{proof}

\noindent {\bf Remark}. Corollary $\ref{maincor}$ is a \emph{strict} improvement of both Arhangel'skii's Theorem and the Hajnal-Juh\'asz inequality. Indeed, if $S$ is the Sorgenfrey line and $A([0,1])$ the Aleksandroff duplicate of the unit interval, then  the space $X=(S \times S) \oplus A([0,1])$ is first countable, $pwL_c(X)=\aleph_0$ and $L(X)=c(X)=\mathfrak c$.

\medskip

Recall that a space is initially $\kappa$-compact if every open cover of cardinality $\leq \kappa$ has a finite subcover (for $\kappa=\omega$ we obtain the usual notion of countable compactness). The following Lemma essentially says that if $X$ is an initially $\kappa$-compact spaces such that $wL_c(X) \leq \kappa$, then it satisfies the definition of $pwL_c(X) \leq \kappa$ when restricted to decompositions of cardinality at most $\kappa$.

\begin{lemma} \label{init}
Let $X$ be an initially $\kappa$-compact space such that $wL_c(X) \leq \kappa$ and $F$ be a closed subset of $X$. If $\mathcal{U}$ is an open cover of $F$ and $\{\mathcal{U}_\alpha: \alpha < \kappa \}$ is a $\kappa$-sized decomposition of $\mathcal{U}$, then there are $\kappa$-sized subfamilies $\mathcal{V}_\alpha \subset \mathcal{U}_\alpha$ such that $F \subset \bigcup \{\overline{\bigcup \mathcal{V}_\alpha}: \alpha < \kappa \}$
\end{lemma}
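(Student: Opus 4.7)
My plan is to coarsen the given $\kappa$-sized decomposition to a $\kappa$-sized open cover of $F$, use initial $\kappa$-compactness to extract a \emph{finite} subcover, and then invoke $wL_c(X)\le\kappa$ once on the finitely many decomposition pieces that survive, relying on the fact that closure distributes over finite unions to split the resulting small dense subfamily back along the decomposition.

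Concretely, I would first set $W_\alpha:=\bigcup\mathcal{U}_\alpha$ for each $\alpha<\kappa$, so that $\{W_\alpha:\alpha<\kappa\}$ is an open cover of $F$ of cardinality at most $\kappa$. Since any closed subspace of an initially $\kappa$-compact space is itself initially $\kappa$-compact (simply extend any $\le\kappa$-sized open cover of $F$ to one of $X$ by adjoining $X\setminus F$ and apply initial $\kappa$-compactness of $X$), I obtain indices $\alpha_1,\ldots,\alpha_n<\kappa$ with $F\subseteq W_{\alpha_1}\cup\cdots\cup W_{\alpha_n}$. Then $\mathcal{U}':=\mathcal{U}_{\alpha_1}\cup\cdots\cup\mathcal{U}_{\alpha_n}$ is an open cover of the closed set $F$, so applying $wL_c(X)\le\kappa$ yields $\mathcal{V}\in[\mathcal{U}']^{\le\kappa}$ with $F\subseteq\overline{\bigcup\mathcal{V}}$. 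I would then define $\mathcal{V}_{\alpha_i}:=\mathcal{V}\cap\mathcal{U}_{\alpha_i}$ for $i=1,\ldots,n$ and $\mathcal{V}_\alpha:=\emptyset$ for the remaining indices; since the decomposition is disjoint, the $\mathcal{V}_{\alpha_i}$ partition $\mathcal{V}$, and in particular each has size at most $\kappa$.

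To finish, I invoke the elementary fact that closure distributes over finite unions, which yields
$$F\subseteq\overline{\bigcup\mathcal{V}}=\overline{\bigcup_{i=1}^{n}\bigcup\mathcal{V}_{\alpha_i}}=\bigcup_{i=1}^{n}\overline{\bigcup\mathcal{V}_{\alpha_i}}\subseteq\bigcup_{\alpha<\kappa}\overline{\bigcup\mathcal{V}_\alpha},$$
which is exactly the required conclusion. I do not foresee any serious obstacle here: the entire content of the argument is that initial $\kappa$-compactness compresses the $\kappa$-indexed decomposition down to a finite subcover, after which a single application of $wL_c$ combined with the distributivity of closure over finite unions produces the piecewise form directly.
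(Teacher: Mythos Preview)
Your proof is correct and follows essentially the same approach as the paper: coarsen each piece to $W_\alpha=\bigcup\mathcal{U}_\alpha$, use initial $\kappa$-compactness to cut down to finitely many pieces, apply $wL_c(X)\le\kappa$ once to their union, and split the resulting $\le\kappa$-sized family back along the decomposition. Your write-up is in fact slightly more careful than the paper's, since you justify why $F$ inherits initial $\kappa$-compactness and make explicit the use of closure distributing over finite unions in the final step.
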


\begin{proof}
Let $U_\alpha=\bigcup \mathcal{U}_\alpha$. Then $\{U_\alpha: \alpha < \kappa \}$ is an open cover of $F$ of cardinality $\kappa$, so by initial $\kappa$-compactness there is a finite subset $S$ of $\kappa$ such that $F \subset \{U_\alpha: \alpha \in S \}$. Let now $\mathcal{W}=\bigcup \{\mathcal{U}_\alpha: \alpha  \in S \}$. We then have $F \subset \bigcup \mathcal{W}$ and hence by $wL_c(X) \leq \kappa$ we can find a $\kappa$-sized subfamily $\mathcal{W}'$ of $\mathcal{W}$ such that $F \subset \overline{\bigcup \mathcal{W}'}$. Set now $\mathcal{V}_\alpha=\{W \in \mathcal{W}': W \in \mathcal{U}_\alpha \}$. Then $|\mathcal{V}_\alpha| \leq \kappa$ and $F \subset \bigcup \{\overline{\bigcup \mathcal{V}_\alpha}: \alpha  <\kappa\}$, as we wanted.
\end{proof}

Noticing that in the proof of Theorem $\ref{mainthm}$ we only needed to apply the definition of $pwL_c(X) \leq \kappa$ to decompositions of cardinality $\kappa$, Theorem $\ref{mainthm}$ and Lemma $\ref{init}$ imply the following corollaries.

\begin{corollary}
\cite{BS} Let $X$ be an initially $\kappa$-compact space containing a dense set of points of character $\leq \kappa$ and such that $wL_c(X) \cdot t(X) \leq \kappa$. Then $wL(X^c_\kappa) \leq 2^\kappa$.
\end{corollary}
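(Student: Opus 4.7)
The plan is to rerun the proof of Theorem \ref{mainthm} verbatim, noticing that the only use of the hypothesis $pwL_c(X) \leq \kappa$ in that proof is at a single point inside Claim 2, applied to a decomposition of cardinality exactly $\kappa$, and then to replace that single step by an appeal to Lemma \ref{init}. Since the hypotheses of the corollary supply $t(X) \leq \kappa$, a dense set of points of character $\leq \kappa$, Hausdorffness (implicit in the context), and initial $\kappa$-compactness together with $wL_c(X) \leq \kappa$, we have precisely what is needed to feed Lemma \ref{init}.

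Concretely, I would fix a cover $\mathcal{F}$ of $X$ by $G^c_\kappa$-sets and a $\kappa$-closed elementary submodel $M \prec H(\theta)$ of size $2^\kappa$ containing $X$, $\mathcal{F}$, and $\kappa+1$, exactly as in the proof of Theorem \ref{mainthm}. Claim 1 of that proof, which asserts that $\mathcal{F}\cap M$ covers $\overline{X \cap M}$, uses only $t(X) \leq \kappa$, Hausdorffness, and $\kappa$-closedness of $M$, so it carries over without change. For Claim 2, I assume toward a contradiction that some $p \in X$ with $\chi(p,X) \leq \kappa$ lies outside $\overline{\bigcup(\mathcal{F}\cap M)}$, fix a local base $\{V_\alpha:\alpha<\kappa\}$ at $p$, and construct, exactly as in the original argument, the open cover $\mathcal{U} = \{U \in \mathcal{C} : (\exists \beta<\kappa)(U \cap V_\beta = \emptyset)\}$ of the closed set $\overline{X\cap M}$ together with its $\kappa$-sized decomposition $\{\mathcal{U}_\alpha : \alpha < \kappa\}$ where $\mathcal{U}_\alpha = \{U \in \mathcal{U} : U \cap V_\alpha = \emptyset\}$.

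At this point the original proof invokes $pwL_c(X) \leq \kappa$. In our setting, instead, I would note that $\overline{X\cap M}$ is a closed subspace of the initially $\kappa$-compact space $X$ and is therefore itself initially $\kappa$-compact, and that $wL_c(X) \leq \kappa$ by assumption; so Lemma \ref{init}, applied to the closed set $\overline{X \cap M}$, the open cover $\mathcal{U}$, and the decomposition $\{\mathcal{U}_\alpha:\alpha<\kappa\}$ of cardinality $\kappa$, produces $\kappa$-sized subfamilies $\mathcal{V}_\alpha \subset \mathcal{U}_\alpha$ with $\overline{X \cap M} \subset \bigcup\{\overline{\bigcup \mathcal{V}_\alpha}:\alpha<\kappa\}$. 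From here the proof of Claim 2 proceeds verbatim: $\kappa$-closedness of $M$ puts the sequence $\{\overline{\bigcup \mathcal{V}_\alpha}:\alpha<\kappa\}$ into $M$, elementarity pushes the covering statement from $M$ to $H(\theta)$, and the choice of $p$ (with $p \notin \overline{\bigcup \mathcal{V}_\alpha}$ for every $\alpha$) gives a contradiction. The bound $|\mathcal{F}\cap M| \leq 2^\kappa$ then yields $wL(X^c_\kappa) \leq 2^\kappa$.

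The only substantive thing to verify, and the step I expect to be the lone potential sticking point, is the compatibility check that Lemma \ref{init} really applies in place of the $pwL_c(X)$ hypothesis: namely, that the decomposition produced inside Claim 2 is indeed indexed by $\kappa$ (it is, by construction from the local base of $p$) and that $\overline{X\cap M}$ inherits initial $\kappa$-compactness from $X$ (it does, since initial $\kappa$-compactness is closed-hereditary). Everything else is a transcription of the proof of Theorem \ref{mainthm}.
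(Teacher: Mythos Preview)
Your proposal is correct and is exactly the argument the paper intends: the authors simply remark that in the proof of Theorem~\ref{mainthm} the hypothesis $pwL_c(X)\le\kappa$ is invoked only once, on a $\kappa$-indexed decomposition, and that Lemma~\ref{init} supplies that step under the present hypotheses. Your write-up spells this out in detail; the only superfluous remark is that $\overline{X\cap M}$ inherits initial $\kappa$-compactness, since Lemma~\ref{init} is already formulated for an arbitrary closed $F\subset X$ and does not require this to be checked separately.
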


\begin{corollary}
(Alas, \cite{Al}) Let $X$ be an initially $\kappa$-compact space with a dense set of points of character $\kappa$, such that $wL_c(X) \cdot t(X) \cdot \psi_c(X) \leq \kappa$. Then $|X| \leq 2^\kappa$.
\end{corollary}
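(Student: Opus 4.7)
The plan is to follow the proof of Theorem~\ref{mainthm} almost verbatim, substituting Lemma~\ref{init} for the single invocation of the hypothesis $pwL_c(X)\leq\kappa$. As remarked just before the corollary, the only place in the proof of Theorem~\ref{mainthm} where $pwL_c(X)\leq\kappa$ is used is in Claim~2, applied to the specific decomposition $\{\mathcal{U}_\alpha:\alpha<\kappa\}$ indexed by a local base at the witnessing point $p$. That decomposition has cardinality at most $\kappa$, so Lemma~\ref{init} applies under the present assumptions of initial $\kappa$-compactness and $wL_c(X)\leq\kappa$. It yields $\kappa$-sized subfamilies $\mathcal{V}_\alpha\subset\mathcal{U}_\alpha$ with $\overline{X\cap M}\subset\bigcup_{\alpha<\kappa}\overline{\bigcup\mathcal{V}_\alpha}$, which is precisely what Claim~2 needs to derive its contradiction.

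With this substitution in place, all the remaining hypotheses of Theorem~\ref{mainthm} are satisfied: $X$ is Hausdorff, $t(X)\leq\kappa$, and $X$ has a dense set of points of character $\leq\kappa$. Running the argument verbatim therefore gives $wL(X^c_\kappa)\leq 2^\kappa$.

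The last step is to convert this bound on $wL(X^c_\kappa)$ into a bound on $|X|$, and this is where the extra hypothesis $\psi_c(X)\leq\kappa$ is used. For each $x\in X$, pick open neighbourhoods $\{U_\alpha:\alpha<\kappa\}$ of $x$ with $\bigcap_{\alpha<\kappa}\overline{U_\alpha}=\{x\}$; then
\[
\{x\}\subseteq\bigcap_{\alpha<\kappa}U_\alpha\subseteq\bigcap_{\alpha<\kappa}\overline{U_\alpha}=\{x\},
\]
so $\{x\}$ is a $G^c_\kappa$-set and $X^c_\kappa$ is discrete. In a discrete space, the open cover by singletons admits no proper subfamily with dense union, whence $|X|=|X^c_\kappa|\leq wL(X^c_\kappa)\leq 2^\kappa$.

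The only real difficulty, already packaged in Lemma~\ref{init}, is bridging the gap between the stronger hypothesis $pwL_c(X)\leq\kappa$ of Theorem~\ref{mainthm} and the weaker $wL_c(X)\leq\kappa$ assumed here; initial $\kappa$-compactness is exactly what allows one to collapse a $\kappa$-sized decomposition to a finite one, after which $wL_c$ is strong enough to supply the required piecewise-dense refinement.
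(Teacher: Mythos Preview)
Your argument is correct and matches the paper's intended proof: the sentence preceding the two corollaries explicitly says that Theorem~\ref{mainthm} and Lemma~\ref{init} together yield them, and you have spelled out exactly that derivation, including the use of $\psi_c(X)\le\kappa$ (which also guarantees the Hausdorff property you invoke) to make $X^c_\kappa$ discrete as in Corollary~\ref{maincor}.
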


\section{Open Questions}

Corollary $\ref{regcor}$ can be slightly improved by replacing regularity with the Urysohn separation property (that is, every pair of distinct points can be separated by disjoint closed neighbourhoods). Indeed, in a similar way as in the proof of Proposition $\ref{prop}$ ($\ref{reg}$) it can be shown that if $X$ is Urysohn then $wL_\theta(X) \leq pwL(X)$, where $wL_\theta(X)$ is the weak Lindel\"of number for \emph{$\theta$-closed sets} (see \cite{BC}). Moreover, $|X| \leq 2^{wL_\theta(X) \cdot \chi(X)}$ for every Urysohn space $X$. However it's not clear whether regularity can be weakened to the Hausdorff separation property. That motivates the next question.

\begin{question}
Is the inequality $|X| \leq 2^{pwL(X) \cdot \chi(X)}$ true for every Hausdorff space $X$? 
\end{question}

Moreover, we were not able to find an example which distinguishes countable piecewise weak Lindel\"of number for closed sets from the strict quasi-Lindel\"of property.

\begin{question}
Is there a strict quasi-Lindel\"of space $X$ such that $pwL_c(X) > \aleph_0$?
\end{question}

Finally, Arhangel'skii's notion of a strict quasi-Lindel\"of space suggests a natural cardinal invariant. Define the strict quasi-Lindel\"of number of $X$ ($sqL(X)$) to be the least cardinal number $\kappa$, such that for every closed subset $F$ of $X$, for every open cover $\mathcal{U}$ of $F$ and for every \emph{$\kappa$-sized} decomposition $\{\mathcal{U}_\alpha: \alpha < \kappa \}$ of $\mathcal{U}$ there are $\kappa$-sized subfamilies $\mathcal{V}_\alpha \subset \mathcal{U}_\alpha$ such that $X \subset \bigcup \{\overline{\bigcup \mathcal{V}_\alpha}: \alpha < \kappa \}$. Obviously $sqL(X) \leq pwL_c(X)$. It's not at all clear from our argument whether the piecewise weak-Lindel\"of number for closed sets can be replaced with the strict quasi-Lindel\"of number in Corollary $\ref{maincor}$.

\begin{question}
Let $X$ be a Hausdorff space. Is it true that $|X| \leq 2^{sqL(X) \cdot \chi(X)}$?
\end{question}

Even the following special case of the above question seems to be open.

\begin{question}
Let $X$ be a strict quasi-Lindel\"of space. Is it true that $|X| \leq 2^{\chi(X)}$?
\end{question}


\begin{thebibliography}{9}

\bibitem{Al}  O. T. Alas, \emph{ More topological cardinal inequalities},  Colloquium Matematicum \textbf{65} (1993),  165--168.
\bibitem{A} A.V. Arhangel'ski\u\i, \emph{The power of bicompacta with first axiom of countability},  Soviet Math. Dokl., \textbf{10} (1969), 951--955.
\bibitem{A1} A.V. Arhangel'ski\u\i,  \emph{A theorem  about cardinality},  Russian Math. Surveys, \textbf{34} (1979), 153--154.

\bibitem{Ag} A.V. Arhangel'ski\u\i, \emph{A generic theorem in the theory of cardinal invariants of topological spaces}, Comment. Math. Univ. Carolin. \textbf{36} (1995), 303--325.

\bibitem{Agd} A.V. Arhangel'ski\u\i, \emph{$G_\delta$-modification of compacta and cardinal invariants}, Comment. Math. Univ. Carolin. \textbf{47} (2006), 95--101.

\bibitem{BGW} M. Bell, J. Ginsburg, and G. Woods, \emph {Cardinal inequalities for topological spaces involving the weak Lindel\"of number}, Pacific J. Math.,  \textbf{79} (1978), 37--45.

\bibitem{BC} A. Bella and F. Cammaroto, \emph{On the cardinality of Urysohn spaces}, Canad. Math. Bull. \textbf{31} (1988), 153--158.

\bibitem{BS} A. Bella and S. Spadaro, \emph{Cardinal invariants for the $G_\delta$-topology}, to appear in Colloq. Math.

\bibitem{CPR} N.A. Carlson, J.R. Porter and G.J. Ridderbos, \emph{On cardinality bounds for power homogeneous spaces and the $G_\kappa$-modification of a space}, Topology Appl. \textbf{159} (2012), 2932--2941.

\bibitem{Dow} A. Dow, \emph{An introduction to applications of elementary submodels to topology}, Topology Proc. \textbf{13} (1988), no. 1, 17--72. 

\bibitem{En}  R. Engelking,  \emph{General Topology}, Heldermann-Verlag, 1989.

\bibitem{FW} W. Fleischmann and S. Williams, \emph{The $G_\delta$-topology on compact spaces}, Fundamenta Mathematicae \textbf{83} (1974), 143--149.

\bibitem{HJ}  A. Hajnal and  I. Juh\'asz,  \emph{Discrete subspaces of topological spaces},  Indag. Math. \textbf{29} (1967), 343--356.

\bibitem{HChapter} R.E. Hodel, \emph{Cardinal Functions I}, Handbook of Set-Theoretic Topology, ed. by K. Kunen and J.E. Vaughan, North Holland, Amsterdam, 1984, pp. 1--61.

\bibitem{H} R.E. Hodel, \emph{Arhangel'skii's solution to Alexandroff's problem: A survey}, Topology Appl. \textbf{153} (2006), 2199--2217.

\bibitem{J} I. Juh\'asz, \emph{On two problems of A.V. Arhangel'skii}, General Topology and its Applications \textbf{2} (1972), 151--156.

\bibitem{S} S. Spadaro, \emph{Infinite games and chain conditions}, Fundamenta Mathematicae \textbf{234} (2016), 229--239.

\bibitem{SSz} S. Spadaro and P. Szeptycki, \emph{$G_\delta$-covers of compact spaces}, Acta Mathematica Hungarica \textbf{154} (2018), 252--263.
\end{thebibliography}
\end{document}